\numberwithin{equation}{section}
\theoremstyle{plain}
\newtheorem{theorem}{Theorem}[section]
\newtheorem{lemma}[theorem]{Lemma}
\newtheorem{proposition}[theorem]{Proposition}
\newtheorem{corollary}[theorem]{Corollary}
\theoremstyle{definition}
\newtheorem{notation}[theorem]{Notation}
\newtheorem{remark}[theorem]{Remark}
\newcommand{\natplus}{\mathbb{N}^+}
\newcommand{\flo}{\underline{f}}
\newcommand{\fvi}{f_\mathrm{vi}}
\newcommand{\mvi}{m_\mathrm{vi}}
\newcommand{\svi}{s_\mathrm{vi}}
\newcommand{\fpc}{f_\mathrm{pc}}
\newcommand{\F}{\mathcal{F}}
\renewcommand{\bot}{\hat{0}}
\renewcommand{\top}{\hat{1}}
\title[Exponential lower bounds of lattice counts]{Exponential lower bounds of lattice counts \\ by vertical sum and 2-sum}
\begin{document}

\author[J. Kohonen]{Jukka Kohonen}
\address{Department of Computer Science\\ University of Helsinki, Finland}
\email{jukka.kohonen@iki.fi}
\subjclass{}
\keywords{Modular lattices, Semimodular lattices, Vertical sum, Vertical 2-sum, Counting}

\begin{abstract}
  We consider the problem of finding lower bounds on the number of
  unlabeled $n$-element lattices in some lattice family.  We show that
  if the family is closed under vertical sum, exponential lower bounds
  can be obtained from vertical sums of small lattices whose numbers
  are known.  We demonstrate this approach by establishing that the
  number of modular lattices is at least $2.2726^n$ for $n$ large
  enough.

  We also present an analogous method for finding lower bounds on the
  number of vertically \emph{indecomposable} lattices in some family.
  For this purpose we define a new kind of sum, the \emph{vertical
    2-sum}, which combines lattices at two common elements.  As an
  application we prove that the numbers of vertically indecomposable
  modular and semimodular lattices are at least $2.1562^n$ and
  $2.6797^n$ for $n$ large enough.
\end{abstract}

\maketitle

\section{Introduction}
\label{sec:intro}

One of the most elementary questions regarding a family of
combinatorial objects is: how many are they?  For various lattice
families this question has been approached in two ways.  Small
lattices have been generated by computation, and counted exactly.
Numbers of large lattices have been lower and upper bounded by
assorted methods.

The purpose of this note is to demonstrate that in some lattice
families, useful exponential lower bounds are obtained from vertical
compositions of small lattices, which have been counted by
computation.  By exponential we mean $c^n$, where $c$ is a constant
and $n$ is the number of elements.

We consider two kinds of vertical composition.  First we show how the
ordinary \emph{vertical sum} leads to exponential lower bounds in
families that are closed under vertical sum.  As an application, we
establish that the number of unlabeled modular lattices is at least
$2.2726^n$ for $n$ large enough.  This improves upon the previous
bound $2^{n-3}$ by Jipsen and Lawless~\cite{jipsen2015}.  Our bound is
derived from the counts of vertically indecomposable modular lattices
of $n \le 30$ elements, computed by the author in~\cite{kohonen2017}.
Further computations are likely to yield improved lower bounds.

Secondly we target the numbers of vertically \emph{indecomposable}
lattices, which may be more interesting.  To this end we define the
\emph{vertical 2-sum}, and show that it yields exponential lower
bounds on vertically indecomposable lattices.  As an application, we
establish that the numbers of unlabeled, vertically indecomposable
modular and semimodular lattices are at least $2.1562^n$ and
$2.6797^n$ for $n$ large enough.

\section{Vertical sum}
\label{sec:vertical}

All lattices in this work are finite, nonempty and unlabeled.  If $L$
and $U$ are lattices, their \emph{vertical sum} $L+U$ is defined by
identifying the top element of~$L$ with the bottom element of~$U$.
The vertical sum is associative, and the vertical sum of several
lattices is defined in the obvious way.  In fact, lattices with
vertical sum are a monoid, with the singleton lattice as its neutral
element.  For completeness we define the empty vertical sum to be the
singleton.

A~lattice $X$ is \emph{vertically decomposable} if it contains a
\emph{knot}, that is, an element distinct from top and bottom and
comparable to all elements.  One can then decompose $X$ at the knot
into two non-singleton lattices $L$ and $U$, whose vertical sum
is~$X$.  A~lattice that has no knot is \emph{vertically
  indecomposable}, or a \emph{vi-lattice}.  It is well
known~\cite{erne2002} that every finite nonempty lattice has a unique
vertical decomposition, that is, a representation as a vertical sum of
non-singleton vi-lattices.\footnote{The literature is varied on
  whether the singleton lattice is defined as vertically
  indecomposable.  In any case it needs some special treatment to
  ensure that vertical decompositions are unique.  Ern\'e \emph{et
    al.}~\cite{erne2002} define the singleton to be vertically
  \emph{decomposable}.  Although this feels odd, it is analogous to
  the now standard practice of excluding $1$ from primes to make prime
  factorization unique.  Some other authors tacitly include the
  singleton among vi-lattices~\cite{heitzig2002,jipsen2015}.  We
  define it as a vi-lattice but exclude it explicitly when necessary.}

\begin{notation}
  We will generally write $f(n)$ for the number of $n$-element
  lattices in some family, and $\fvi(n)$ for the corresponding number
  of vi-lattices.  For the numbers of modular lattices and modular
  vi-lattices, we write $m(n)$ and $\mvi(n)$.  For semimodulars we
  write $s(n)$ and $\svi(n)$.
\end{notation}

Vertical sum and decomposition have become standard tools in lattice
counting, due to the following observation (cf.~\cite[Equation
1]{heitzig2002}).

\begin{lemma}
  Let $\F$ be a lattice family that is closed under vertical sum and
  vertical decomposition, and contains the singleton lattice.  Let
  $f(n)$ and $\fvi(n)$ be the numbers of $n$-element lattices and
  vi-lattices in $\F$, respectively.  Then $f$~and $\fvi$ are related
  by
  \begin{equation}
    f(n) = \sum_{k=2}^n \fvi(k) \, f(n-k+1), \qquad \text{for $n \ge 2$.}
    \label{eq:recursion}
  \end{equation}
  \label{lemma:recursion}
\end{lemma}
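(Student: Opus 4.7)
The plan is to prove the recursion via a bijection induced by the unique vertical decomposition mentioned in the text. For $n \ge 2$, I would set up a correspondence between $n$-element lattices in $\F$ and pairs $(V, R)$, where $V$ is a $k$-element non-singleton vi-lattice in $\F$ and $R$ is an $(n-k+1)$-element lattice in $\F$; summing over $k$ from $2$ to $n$ then yields exactly $\sum_{k=2}^n \fvi(k)\, f(n-k+1)$.

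Forward direction: given any $n$-element $L \in \F$, invoke the unique decomposition $L = V_1 + V_2 + \cdots + V_r$ into non-singleton vi-lattices, and set $V := V_1$ and $R := V_2 + \cdots + V_r$ (interpreted as the singleton when $r = 1$). Closure under vertical decomposition places each $V_i$ in $\F$; closure under vertical sum, together with the assumption that $\F$ contains the singleton, then places $R$ in $\F$. Because vertical sum identifies one element, $|L| = |V| + |R| - 1$, so $|R| = n - k + 1$ where $k := |V| \ge 2$.

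Reverse direction: given a pair $(V, R)$ of the above form, closure under vertical sum yields $V + R \in \F$, which has $n$ elements. To see the two constructions are mutually inverse, note that since $V$ is vi and non-singleton, concatenating its trivial decomposition with the unique decomposition of $R$ gives a decomposition of $V + R$ into non-singleton vi-lattices; by the uniqueness statement cited in the text, this \emph{is} the decomposition of $V + R$, so the forward map applied to $V + R$ recovers $(V, R)$. The bijection descends to unlabeled lattices because two vertical sums are isomorphic iff their unique decompositions match term-by-term, again by uniqueness.

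The only delicate point I anticipate is the corner case $r = 1$, where $L$ is itself a vi-lattice: here $V = L$ and $R$ is the singleton, contributing the term $\fvi(n) \cdot f(1) = \fvi(n)$ with $k = n$, which requires $f(1) = 1$ (holding because the singleton lies in $\F$). With this case absorbed into the same summation, counting isomorphism classes on both sides produces the stated identity. I do not expect any real obstacle beyond keeping the singleton bookkeeping straight.
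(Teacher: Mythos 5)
Your proposal is correct and follows essentially the same route as the paper: peel off the lowest vi-component $L$ of the unique vertical decomposition, pair it with the remainder $U$ (the singleton when the lattice is itself vertically indecomposable), and count the pairs using closure under vertical sum and decomposition. You spell out the bijection and its inverse in more detail than the paper does, but the underlying argument is identical.
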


\begin{proof}
  Let $n \ge 2$.  Each $n$-element lattice $X \in \F$ can be uniquely
  represented as a vertical sum $X = L+U$, where $L$ is vertically
  indecomposable and $|L| \ge 2$.  Because $\F$ is closed under
  vertical decomposition, we have $L,U \in \F$.  Note that if $X$ is
  vertically indecomposable, we still have $X = L+U$, where $L=X$,
  and $U$ is the singleton.

  The sum \ref{eq:recursion} counts such vertical sums, with $k$
  iterating over the possible cardinalities of $L$.  For each value of
  $k$, there are $\fvi(k)$ choices for $L$, and $f(n-k+1)$ choices for
  $U$.  In the boundary case $k=n$ we have $f(n-k+1)=f(1)=1$ as we
  assumed that the singleton is in~$\F$.  Also, each such vertical sum
  gives a lattice in $\F$, because $\F$ is closed under vertical sum.
\end{proof}

Modular, semimodular, distributive, and graded lattices are examples
of families where Lemma~\ref{lemma:recursion} applies.  It is well
known that \eqref{eq:recursion} can be used to reduce the workload
when counting small lattices by exhaustive generation.  The idea is to
generate only the vi-lattices in $\F$ up to some maximum size~$N$,
thus obtaining the values $\fvi(2),\ldots,\fvi(N)$, and then to
calculate $f(2),\ldots,f(N)$ by the recurrence.  This method has been
used with various lattice
families~\cite{erne2002,gebhardt2016,heitzig2002,jipsen2015,kohonen2017}.

We must point out that Lemma~\ref{lemma:recursion} requires the family
to be closed \emph{both} under vertical sum \emph{and} under vertical
decomposition.  Being closed under vertical sum is not enough: as a
counterexample, consider the family ``graded lattices of even rank''.
It contains lattices such as the 5-element chain that are not
accounted for by the sum \eqref{eq:recursion}, as their vi-components
fall out of the family.  But being closed under vertical sum suffices
for the \emph{inequality}
\begin{equation}
  f(n) \ge \sum_{k=2}^n \fvi(k) \, f(n-k+1), \qquad \text{for $n \ge 2$,}
  \label{eq:recursionineq}
\end{equation}
which is enough for proving lower bounds on $f(n)$.

We now proceed to demonstrate that besides exact counting of small
lattices, vertical sums are also useful for exponential lower bounds
on $f(n)$, that is, bounds of the form
\[f(n) \ge c^n
\]
with some constant~$c$.  The simplest way is to take vertical sums of
constant-size lattices; we begin with this method to illustrate its
ease.  (But we will prove stronger bounds later.)

\begin{theorem}
  Let $\F$ be a lattice family that is closed under vertical sum, and
  contains the $2$-element chain.  Let $f(n)$ be the number of
  $n$-element lattices in $\F$, and $N \ge 2$ an integer constant.
  Then $f(n) \ge \Omega(c^n)$, where $c = f(N)^{1/(N-1)}$.
  \label{thm:easy}
\end{theorem}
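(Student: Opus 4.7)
My approach is to build many $n$-element lattices as vertical sums of $N$-element blocks drawn from~$\F$. For each $k \ge 1$ and each ordered tuple $(L_1,\ldots,L_k)$ of $N$-element lattices in~$\F$, the vertical sum $X := L_1 + \cdots + L_k$ belongs to~$\F$ (by closure under vertical sum) and has size $k(N-1)+1$. There are $f(N)^k$ such tuples, so the main task reduces to showing that the map tuple $\mapsto X$ is injective; that will yield $f(k(N-1)+1) \ge f(N)^k$.

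The injectivity is the crux, and requires some care because each block~$L_i$ may itself be vertically decomposable, so one must rule out ``reshuffling'' of vi-pieces across block boundaries. I would argue via the knots of~$X$: the identified top-of-$L_i$/bottom-of-$L_{i+1}$ is a knot of~$X$, hence lies on the totally ordered knot chain $\hat{0} = x_0 < x_1 < \cdots < x_t = \hat{1}$ of~$X$ (whose intervals $[x_{j-1},x_j]$ are exactly the pieces of the unique vi-decomposition of~$X$). Since $|[\hat{0}, x_j]|$ is strictly increasing in~$j$, at most one knot can have a prescribed lower-interval size. Thus the glue positions---the knots $y_i$ with $|[\hat{0}, y_i]| = i(N-1)+1$ for $i=1,\ldots,k-1$---are determined by~$X$ alone, and with them the blocks $L_i = [y_{i-1}, y_i]$. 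This establishes $f(k(N-1)+1) \ge f(N)^k$ for every $k \ge 1$.

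For a general~$n$, I would set $k := \lfloor (n-1)/(N-1) \rfloor$, so that $m := k(N-1)+1 \le n$, and pad any $m$-element lattice from the previous construction by vertical-summing the $(n-m+1)$-element chain on top; this chain lies in~$\F$ because it is an iterated vertical sum of the $2$-chain. The padding is injective by the same knot-height argument (the padded lattice recovers its base as the interval $[\hat{0}, y]$ for the unique knot~$y$ at height~$m$). Hence
\[
  f(n) \;\ge\; f(N)^k \;\ge\; f(N)^{(n-N)/(N-1)} \;=\; f(N)^{-N/(N-1)} \cdot c^n \;=\; \Omega(c^n),
\]
as required. The only delicate step is the injectivity argument in the second paragraph; everything else is routine bookkeeping.
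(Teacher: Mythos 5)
Your proof is correct and takes essentially the same route as the paper's: it builds $f(N)^k$ distinct $n$-element lattices as vertical sums of $N$-element blocks and then handles general $n$ by padding with a chain, which is exactly how the paper uses the $2$-element chain to get monotonicity of $f$. Your knot-chain argument for injectivity is a careful spelling-out of a step the paper merely asserts (``there is only one way of breaking $X$ into a vertical sum of $h$ components of $N$ elements each''), so it is a welcome addition rather than a divergence.
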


\begin{proof}
  Let $c$ be as stated.  We first prove the case when $n = (N-1)h+1$,
  where $h \ge 1$ is an integer.  Consider $h$-tuples
  $(L_1,L_2,\ldots,L_h)$ of $N$-element lattices in~$\F$.  There are
  $f(N)^h$ such tuples; each gives rise to a vertical sum
  \[
  L_1 + L_2 + \cdots + L_h = X,
  \]
  which is a lattice of $n = (N-1)h+1$ elements and belongs to~$\F$ by
  assumption.  Different tuples give rise to different lattices,
  because for each such $X$ there is only one way of breaking $X$ into
  a vertical sum of $h$ components of $N$ elements each.  Thus the
  number of $n$-element lattices in $\F$ is lower bounded by the
  number of the tuples:
  \[
  f(n) \ge f(N)^h = f(N)^{(n-1)/(N-1)} = c^{n-1}.
  \]

  For arbitrary $n \ge N$ we round $n$ down to the nearest value where
  the previous case applies.  More precisely, let $n'$ be the largest
  integer of the form $n' = (N-1)h+1$ such that $n' \le n$ and $h \ge
  1$ is an integer.  Note that $n' \ge n-N+2$.  Because $\F$ contains
  the 2-element chain, $f$ is nondecreasing (any $n$-element lattice
  can be extended to $n+1$ elements by adding the 2-element chain on
  top).  Thus
  \[
  f(n) \ge f(n') \ge c^{n'-1} \ge bc^n,
  \]
  where $b = c^{1-N}$ is a constant.  This holds for all $n \ge N$, so
  $f(n) \ge \Omega(c^n)$.
\end{proof}

\begin{corollary}
  $m(n) \ge \Omega(2.1332^n)$.
\end{corollary}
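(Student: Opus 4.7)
The plan is simply to invoke Theorem~\ref{thm:easy} with $\F$ equal to the family of modular lattices and with $N$ chosen to be the largest value for which $m(N)$ has been computed exactly, namely $N=30$ as announced in the introduction. Modular lattices are closed under vertical sum (a knot of $L+U$ remains a knot, and modularity is preserved on both sides), and the $2$-element chain is trivially modular, so the hypotheses of the theorem are satisfied and we obtain $m(n) \ge \Omega(c^n)$ with $c = m(30)^{1/29}$.

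The only remaining step is numerical: I would retrieve the value $m(30)$ from the computation reported in~\cite{kohonen2017}. Since only $\mvi(k)$ for $k\le 30$ is computed there directly, I would first apply Lemma~\ref{lemma:recursion} (equivalently, the recurrence~\eqref{eq:recursion} with $f=m$ and $\fvi=\mvi$) to propagate $\mvi(2),\ldots,\mvi(30)$ up to the total counts $m(2),\ldots,m(30)$. Then I would verify by direct calculation that $m(30)^{1/29} \ge 2.1332$, which gives the claimed $c$.

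The main obstacle, if any, is just locating or recomputing the right tabulated number and being a bit careful with the inequality $m(30)^{1/29} \ge 2.1332$ (as opposed to, say, $2.1333$), since the constant in the corollary is chosen to be safely below the true value of $c$. A secondary stylistic choice is whether to present the argument as a one-line application of Theorem~\ref{thm:easy} or to expand the derivation of $m(30)$ from $\mvi$. Given that Theorem~\ref{thm:easy} has already done all the combinatorial work, I would keep the proof to essentially one or two lines: cite the closure properties, cite the computed value of $m(30)$, and state the resulting value of $c$.
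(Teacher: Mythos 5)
Your proposal is correct and follows exactly the paper's route: the paper proves this corollary by applying Theorem~\ref{thm:easy} with $N=30$ and the computed value $m(30)=3\,485\,707\,007$ from~\cite{kohonen2017}, which gives $c=m(30)^{1/29}\approx 2.1333 \ge 2.1332$. Your additional remarks about deriving $m(30)$ from the $\mvi$ values via the recurrence and about checking the closure hypotheses are consistent with the paper, which simply cites the tabulated value directly.
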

\begin{proof}
  Apply Theorem~\ref{thm:easy} with $N=30$ and
  $m(30)=3\,485\,707\,007$ \cite{kohonen2017}.
\end{proof}

\begin{corollary}
  $s(n) \ge \Omega(2.5080^n)$.
\end{corollary}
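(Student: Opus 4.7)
The plan is to apply Theorem~\ref{thm:easy} directly to the family of semimodular lattices, in complete analogy with the preceding corollary for modular lattices. First I would verify the two hypotheses: the family of $n$-element semimodular lattices is closed under vertical sum (covering relations in $L+U$ are inherited from $L$ and from $U$, and the only new covers at the glued element come from maximal elements of $L$ and minimal elements of $U$, where the semimodular inequality is easy to check), and it contains the $2$-element chain. With these hypotheses in place, Theorem~\ref{thm:easy} yields $s(n) \ge \Omega(c^n)$ with $c = s(N)^{1/(N-1)}$ for any constant $N \ge 2$ for which $s(N)$ is known exactly.

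The remaining step is to choose $N$ so that $c \ge 2.5080$. Taking $N = 30$ in parallel with the modular corollary, I would cite the exact value of $s(30)$ from the computational enumeration in~\cite{kohonen2017} and verify numerically that $s(30)^{1/29} \ge 2.5080$. A rough back-of-the-envelope check gives $2.5080^{29} \approx 3.85 \times 10^{11}$, which is well within the range that such enumerations have reached for semimodular lattices, so I expect the tabulated $s(30)$ to clear this threshold comfortably.

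There is essentially no obstacle: the conceptual content is entirely contained in Theorem~\ref{thm:easy}, and this corollary is a numerical instantiation. The only sensitivity is matching the exponent base to an available tabulated value of $s(N)$; the constant $2.5080$ in the statement has presumably been calibrated precisely so that a known computational count realises it, making the verification a routine check.
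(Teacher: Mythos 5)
Your approach is exactly the paper's: the corollary is a direct numerical instantiation of Theorem~\ref{thm:easy}. However, your specific choice $N=30$ does not work, because the enumeration in~\cite{kohonen2017} covers semimodular lattices only up to $n=25$ (unlike the modular case, where counts reach $n=30$); there is no tabulated value of $s(30)$ to cite. The paper instead takes $N=25$ with $s(25)=3\,838\,581\,926$, which gives $c=s(25)^{1/24}\approx 2.5081\ge 2.5080$. So the conceptual content of your proposal is fine and matches the paper, but the one step you deferred as ``routine'' --- matching the constant to an \emph{available} data point --- is precisely where your instantiation would fail; replacing $N=30$ by $N=25$ repairs it.
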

\begin{proof}
  Apply Theorem~\ref{thm:easy} with $N=25$ and
  $s(25)=3\,838\,581\,926$ \cite{kohonen2017}.
\end{proof}

Stronger lower bounds are obtained by applying the
recurrence~\eqref{eq:recursionineq}.  Let $N \ge 2$ be a constant, and
suppose that $\fvi(1),\fvi(2),\ldots,\fvi(N)$ are known.  Then we can
lower bound $f(n)$ by a constant-coefficient recursive sequence as
follows.

\begin{theorem}
  Let $\F$ be a lattice family closed under vertical sum, containing
  the singleton.  Let $f(n)$ and $\fvi(n)$ be the numbers of $n$-element
  lattices and vi-lattices in~$\F$.
  Let $\flo \colon \natplus \to \natplus$ be the sequence defined
  by $\flo(1)=1$,
  \begin{equation}
    \flo(n) = \sum_{k=2}^n \fvi(k) \, \flo(n-k+1)
    \label{eq:initial}
  \end{equation}
  when $n=2,3,\ldots,N$, and
  \begin{equation}
    \flo(n) = \sum_{k=2}^N \fvi(k) \, \flo(n-k+1)
    \label{eq:truncated}
  \end{equation}
  when $n \ge N+1$.  Then $f(n) \ge \flo(n)$ for all $n \ge 1$.
  Furthermore, the infinite sequence $\flo$ is determined by
  $\fvi(1),\fvi(2),\ldots,\fvi(N)$ through a homogeneous linear
  recurrence relation of order $N-1$.
  \label{thm:homo}
\end{theorem}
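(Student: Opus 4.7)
The plan is to prove the inequality $f(n) \ge \flo(n)$ by strong induction on $n$, and then to read off \eqref{eq:truncated} directly as a homogeneous linear recurrence of the stated form.

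For the induction, the base case $n=1$ is immediate: $\flo(1)=1$ and $f(1)\ge 1$ since the singleton belongs to~$\F$. For the inductive step, assume $f(m)\ge \flo(m)$ for all $1\le m<n$. Since $\F$ is closed under vertical sum, the inequality \eqref{eq:recursionineq} established earlier in the section gives $f(n)\ge \sum_{k=2}^{n}\fvi(k)\,f(n-k+1)$. In the range $2\le n\le N$ I apply the inductive hypothesis termwise and compare with \eqref{eq:initial} to conclude $f(n)\ge \flo(n)$. For $n\ge N+1$ I first truncate the right-hand side to $\sum_{k=2}^{N}\fvi(k)\,f(n-k+1)$ by dropping nonnegative tail terms, and then apply the inductive hypothesis termwise; the resulting lower bound is exactly $\flo(n)$ by \eqref{eq:truncated}.

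For the second claim, I rewrite \eqref{eq:truncated} by the substitution $j=k-1$ as
\[
\flo(n) = \fvi(2)\,\flo(n-1) + \fvi(3)\,\flo(n-2) + \cdots + \fvi(N)\,\flo(n-N+1),
\]
which is a homogeneous linear recurrence of order $N-1$ with constant coefficients $\fvi(2),\ldots,\fvi(N)$. I would then observe that at the boundary $n=N$ the two definitions \eqref{eq:initial} and \eqref{eq:truncated} agree (both sums run over $k=2,\ldots,N$), so the recurrence in fact holds for every $n\ge N$. The $N-1$ initial values $\flo(1),\ldots,\flo(N-1)$ are determined from \eqref{eq:initial} using only $\fvi(2),\ldots,\fvi(N-1)$, so the entire sequence $\flo$ is fixed by the data $\fvi(1),\ldots,\fvi(N)$ as claimed. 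The argument is essentially mechanical and I do not anticipate a genuine obstacle; the only point requiring care is the junction between \eqref{eq:initial} and \eqref{eq:truncated} at $n=N$, and this is harmless precisely because the two formulas coincide there.
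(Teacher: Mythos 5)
Your proposal is correct and follows essentially the same route as the paper: the inequality $f(n)\ge\flo(n)$ is obtained by combining \eqref{eq:recursionineq} with the recursive definition of $\flo$ (the paper leaves the induction implicit where you spell it out, and likewise truncates the sum for $n\ge N+1$), and the second claim is the same reindexing $i=k-1$ turning \eqref{eq:truncated} into an order-$(N-1)$ constant-coefficient homogeneous linear recurrence whose coefficients and initial values are fixed by $\fvi(1),\ldots,\fvi(N)$. No substantive difference.
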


\begin{proof}
  For $n=1$, we have $f(n) = \flo(n) = 1$.  For $n=2,3,\ldots,N$, the
  claim $f(n) \ge \flo(n)$ holds by~\eqref{eq:recursionineq}.  For
  $n \ge N+1$ it holds because the right hand side
  of~\eqref{eq:truncated} is a truncated form of the right hand side
  of~\eqref{eq:recursionineq}.

  Let us substitute $i=k-1$ and write $\fvi(i+1)=a_i$ to emphasize
  that these are known constants.  The recurrence~\ref{eq:truncated}
  now becomes
  \begin{equation}
    \flo(n) = \sum_{i=1}^{N-1} a_i \, \flo(n-i).
    \label{eq:homo}
  \end{equation}
  This is a homogeneous linear recurrence relation of order $N-1$ with
  constant coefficients.  The values of
  $\fvi(1),\fvi(2),\ldots,\fvi(N)$ determine both the initial values
  of $\flo(n)$ up to $n=N$, and the coefficients of the recurrence.
  Thus they also determine the whole sequence $\flo$.
\end{proof}

The remaining task is to find an exponential lower bound for
$\flo(n)$.  A~standard method for solving recurrence relations such as
\eqref{eq:homo} begins by finding the roots of the auxiliary equation
\begin{equation}
  x^{N-1} = \sum_{i=1}^{N-1} a_i \, x^{N-1-i}.
  \label{eq:aux}
\end{equation}
We refer to~\cite[\S 7.7]{allenby2010} for details.  Let $r$ be the
root of \eqref{eq:aux} whose absolute value is the largest.  If $r$ is
a single root, then a solution to the recurrence \eqref{eq:homo} is of
the form $br^n + o(r^n)$, where $b$ is a constant.  Generally we will
have to find the roots numerically.  In order to obtain a rigorous
lower bound, one which is not subject to floating point errors, we
will choose $c$ slightly smaller than~$r$, and then prove directly
that $\flo(n) \ge c^n$ for $n$ large enough.

\begin{proposition}
  $m(n) \ge 2.2726^n$ for all $n$ large enough.
  \label{prop:m}
\end{proposition}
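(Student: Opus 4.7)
The plan is to invoke Theorem~\ref{thm:homo} with $\F$ equal to the family of modular lattices and $N = 30$, using the tabulated values of $\mvi(2), \mvi(3), \ldots, \mvi(30)$ from~\cite{kohonen2017}. This produces a homogeneous linear recurrence of the form~\eqref{eq:homo} of order~$29$ with explicit positive integer coefficients $a_i = \mvi(i+1)$, together with a sequence $\flo$ satisfying $m(n) \ge \flo(n)$ for all $n$. It therefore suffices to show $\flo(n) \ge c^n$ for $c = 2.2726$ and all sufficiently large~$n$.

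Following the recipe sketched before the proposition, I would next locate the dominant root $r$ of the auxiliary polynomial~\eqref{eq:aux} numerically. Since all $a_i$ are positive, standard facts about Perron-type polynomials (or a direct application of Descartes' rule) give that $r$ is a simple positive real root strictly larger in absolute value than all other roots; a floating-point computation should place $r$ just above $2.2726$. To turn this into a rigorous bound not subject to floating-point error, I would pick $c = 2.2726$ and verify by exact rational arithmetic that
\begin{equation*}
  \sum_{i=1}^{29} a_i \, c^{-i} \ge 1,
\end{equation*}
equivalently $c^{29} \le \sum_{i=1}^{29} a_i \, c^{29-i}$, which is the promise that $c$ lies (just) below $r$.

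Given this inequality, the inductive step is automatic: if $\flo(n-i) \ge c^{n-i}$ for $i = 1, 2, \ldots, 29$, then the truncated recurrence~\eqref{eq:truncated} gives
\begin{equation*}
  \flo(n) \;=\; \sum_{i=1}^{29} a_i \, \flo(n-i) \;\ge\; c^n \sum_{i=1}^{29} a_i \, c^{-i} \;\ge\; c^n.
\end{equation*}
So the proof reduces to a base case: exhibit some $n_0$ for which $\flo(n_0+j) \ge c^{n_0+j}$ holds simultaneously for all $j = 0, 1, \ldots, 28$.

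The main obstacle, such as it is, is precisely this base case, which is a finite but genuinely computational step. One tabulates $\flo(n)$ from~\eqref{eq:initial} and~\eqref{eq:truncated} up to some modest bound, and looks for a window of $29$ consecutive indices in which $\flo(n) \ge c^n$. Because the true asymptotic growth rate of $\flo$ is $r > c$, the ratio $\flo(n)/c^n$ tends to infinity, so such a window is guaranteed to appear and can be located mechanically. Once it is found, the induction above propagates the bound to all larger~$n$, completing the proof.
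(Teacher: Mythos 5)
Your proposal is correct and follows essentially the same route as the paper: invoke Theorem~\ref{thm:homo} with $N=30$ and the known values of $\mvi$, find the dominant root $r\approx 2.2726\ldots$ of the auxiliary polynomial, take $c=2.2726<r$, and propagate $\flo(n)\ge c^n$ by induction from a computed window of $29$ consecutive base cases (the paper uses the window $150\,000\le n\le 150\,028$). Your version is in fact slightly more explicit than the paper's about the two computational certificates needed --- the exact-arithmetic check that $c^{29}\le\sum_i a_i c^{29-i}$ and the existence of the base-case window --- though note the minor slip that not all $a_i$ are positive (e.g.\ $\mvi(3)=0$, as the missing $x^{27}$ term shows), which is harmless since $a_1=1>0$ keeps the Perron argument intact.
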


\begin{proof}
  Let $N=30$, and define $\flo$ as in Theorem~\ref{thm:homo}, with the
  values of $\fvi(1),\fvi(2),\ldots,\fvi(30)$ taken from the ``modular
  vi'' column of \cite[Table~1]{kohonen2017}.  By
  Theorem~\ref{thm:homo} we have $m(n) \ge \flo(n)$ for all $n \ge 1$.

  The auxiliary equation~\eqref{eq:aux} is now
  \begin{multline*}
    x^{29} = x^{28} +x^{26} +x^{25} +2x^{24} +3x^{23} +7x^{22} +12x^{21} +28x^{20} +54x^{19}
    +127x^{18} \\+266x^{17} +614x^{16} +1356x^{15} +3134x^{14} +7091x^{13} +16482x^{12} 
    +37929x^{11}\\ +88622x^{10} +206295x^{9} +484445x^{8} +1136897x^{7} +2682451x^{6} 
    +6333249x^{5} \\ +15005945x^{4} 
    +35595805x^{3} +84649515x^{2} +201560350x +480845007.
    \label{eq:auxmod}
  \end{multline*}
  Numerically we find that the root with the largest absolute value is
  a single real root $r \approx 2.272651$.  For a lower bound, we take
  $c = 2.2726$ and claim that $\flo(n) \ge c^n$ for $n \ge 150\,000$.
  We prove this by induction.  Applying \eqref{eq:truncated}
  recursively, we see that the claim holds for $150\,000 \le n \le
  150\,028$, which serves as the base case.  We then observe that if
  $\flo(k) \ge 2.2726^{k}$ for $29$ consecutive values from $k=n-29$
  to $k=n-1$, then by applying these inequalities in
  \eqref{eq:truncated} we have $\flo(n) \ge 2.2726^{n}$.  This
  completes the induction.
\end{proof}

The new bound improves upon the bound $m(n) \ge 2^{n-3}$ by Jipsen and
Lawless~\cite{jipsen2015}, but still falls short of the empirical
growth rate.  The ratios $\frac{m(n)}{m(n-1)}$ and
$\frac{\mvi(n)}{\mvi(n-1)}$ for $n \le 30$ look like $m(n)$ and
$\mvi(n)$ are growing roughly as $2.4^n$ \cite{kohonen2017}.  If the
values of $\mvi$ are computed further, Proposition~\ref{prop:m} is
likely to yield improved lower bounds.  For example, if further
computations reveal that $\mvi(31) \ge 2.35 \mvi(30)$, which seems
likely, then the constant $c$ in our lower bound will increase by
about $0.0060$.

For semimodular lattices, no previous lower bound seems to be known,
other than that of modulars. Using the values of $\svi(n)$ for $n \le
25$ from~\cite{kohonen2017}, Theorem~\ref{thm:homo} yields a lower
bound $s(n) \ge 2.6459^n$ for $n$ large enough.  We omit the details
because the bound is superseded by a stronger lower bound on
semimodular vi-lattices in the next section.  However, even the
stronger bound is only exponential.  We note that the ratios of the
consecutive values $\svi(22), \svi(23), \svi(24),$ and $\svi(25)$ are
$3.5082$, $3.5579$ and $3.6057$~\cite{kohonen2017}.  Since the ratios
are steadily increasing, we suspect that the growth of $s(n)$ may be
faster than exponential.

We can try applying Theorem~\ref{thm:homo} to other lattice families.
For distributive lattices, using the data for $n \le 49$ by Ern\'e
\emph{et al.}~\cite{erne2002}, we get a lower bound of $1.8388^n$,
which does not improve upon their results.  For graded lattices, using
the data for $n \le 21$ by the author~\cite{kohonen2017}, we get a
lower bound of $3.4015^n$, but this is not really useful, because it
is already known that their growth is faster than exponential.  From
Klotz and Lucht~\cite{klotz1971} and Kleitman and
Winston~\cite{kleitman1980} we have lower and upper bounds of the form
$c^{n^{3/2}+o(n^{3/2})}$ both for graded lattices and for all
lattices.

Let us conclude this section with a brief qualitative comparison.
From subset relations between families, we have
\[
d(n) \le m(n) \le s(n) \le g(n) \le \ell(n),
\]
where $d(n)$, $g(n)$, and $\ell(n)$ are the numbers of distributive
lattices, graded lattices, and all lattices of $n$ elements.  For
$d(n)$, exponential lower \emph{and} upper bounds are
known~\cite{erne2002}.  For $m(n)$ we have an exponential lower bound,
and the empirical growth seems exponential, but an exponential upper
bound is lacking; the only known upper bound on (semi)modulars seems
to be that of all lattices~\cite{jipsen2015}.  For $s(n)$ we have an
exponential lower bound, but empirically the growth seems faster.  The
growths of $g(n)$ and $\ell(n)$ are \emph{known} to be faster than
exponential.  It remains a topic of further study to better separate
the growth rates of different lattice families.
  
\section{Vertical 2-sum}
\label{sec:vertical2}

We now turn our attention to the numbers of vertically indecomposable
lattices.  Our method is similar to the previous section: arbitrarily
large lattices are constructed from smaller lattices, whose number is
known.

\tikzset{point/.style={circle,inner sep=0,minimum size=5 pt,fill=black}}
\tikzset{invisible/.style={circle,inner sep=0,minimum size=5 pt,fill=none}}

\begin{figure}[b]
  \centering
  \begin{tikzpicture}[point/.style={circle,minimum size=5pt,inner sep=0,fill=black}]
    \matrix(L)[matrix of math nodes, column sep=0.33cm, row sep=0.7cm]{
      \node[invisible](x){}; \\
      & \node[point](a){} ; \\
      \node[point](b){}; && \node[point](c){}; \\
      & \node[point](d){}; & \node[point](e){}; & \node[point](f){}; \\
      && \node[point](g){} ; \\      
    };
    \draw (a)--(b);
    \draw (a)--(c);
    \draw (b)--(d);
    \draw (c)--(d);
    \draw (c)--(e);
    \draw (c)--(f);
    \draw (d)--(g);
    \draw (e)--(g);
    \draw (f)--(g);
    \draw[dashed] ($(b)+(-0.5,0)$)--($(c)+(0.5,0)$);
  \end{tikzpicture}
  \hspace{1.2cm}
  \begin{tikzpicture}
    \matrix(U)[matrix of math nodes, column sep=0.33cm, row sep=0.7cm]{
      && \node[point](1){} ; \\
      \node[point](2){}; && \node[point](3){}; && \node[point](4){}; \\
      & \node[point](5){}; && \node[point](6){}; \\
      && \node[point](7){} ; \\
      \node[invisible](xx){}; \\
    };
    \draw (1)--(2);
    \draw (1)--(3);
    \draw (1)--(4);
    \draw (2)--(5);
    \draw (3)--(5);
    \draw (3)--(6);
    \draw (4)--(6);
    \draw (5)--(7);
    \draw (6)--(7);
    \draw[dashed] ($(5)+(-0.5,0)$)--($(6)+(0.5,0)$);
  \end{tikzpicture}
  \hspace{1.2cm}
  \begin{tikzpicture}[point/.style={circle,minimum size=5pt,inner sep=0,fill=black}]
    \matrix(V)[matrix of math nodes, column sep=0.33cm, row sep=0.7cm]{
      && \node[point](1){} ; \\
      \node[point](2){}; && \node[point](3){}; && \node[point](4){}; \\
      & \node[point](b){}; && \node[point](c){}; \\
      && \node[point](d){}; & \node[point](e){}; & \node[point](f){}; \\
      &&& \node[point](g){} ; \\      
    };
    \draw (1)--(2);
    \draw (1)--(3);
    \draw (1)--(4);
    \draw (2)--(b);
    \draw (3)--(b);
    \draw (3)--(c);
    \draw (4)--(c);
    \draw (b)--(d);
    \draw (c)--(d);
    \draw (c)--(e);
    \draw (c)--(f);
    \draw (d)--(g);
    \draw (e)--(g);
    \draw (f)--(g);
    \draw[dashed] ($(b)+(-0.5,0)$)--($(c)+(0.5,0)$);
  \end{tikzpicture}
  \caption{Two semimodular lattices (left and center) and their
    vertical 2-sum, which is also semimodular (right).}
  \label{fig:v2sum}
\end{figure}
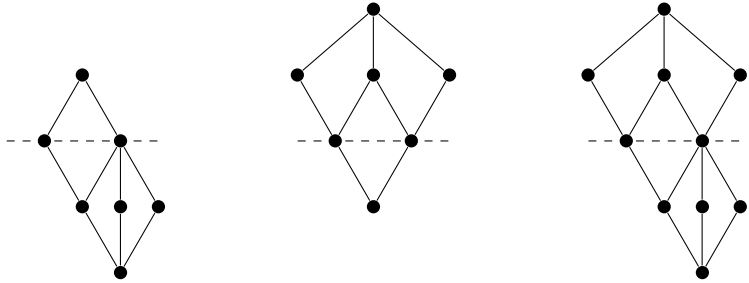
 
Let $L$ and $U$ be lattices such that $L$~has two coatoms and $U$~has
two atoms.  Then a \emph{vertical 2-sum} of $L$ and $U$ is a poset
obtained by removing $\top_L$ (the top of~$L$) and $\bot_U$ (the
bottom of~$U$), and identifying the coatoms of $L$ with the atoms
of~$U$.  The operation is illustrated in Figure~\ref{fig:v2sum}.

\begin{remark} The choice of which coatom is identified with which
  atom may give rise to two nonisomorphic vertical 2-sums, but we will
  not delve further into that issue here.  For our purposes it
  suffices that for any $L$ and $U$ there is at least one vertical
  2-sum, which we denote by $L +_2 U$, by a slight abuse of notation.
  Vertical 2-sums of several lattices can be defined in the obvious
  way by associativity.
  \label{remark:iso}
\end{remark}

\begin{remark}
  $L +_2 U$ has $|L|+|U|-4$ elements.
\end{remark}

\begin{lemma}
  A vertical 2-sum of two lattices is a lattice.
  \label{lemma:islattice}
\end{lemma}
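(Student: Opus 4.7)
My plan is to reduce the lemma to proving that every pair of elements in $V := L +_2 U$ has a meet in $V$. Since $V$ is a finite poset with bottom $\bot_L$ and top $\top_U$, existence of all binary meets makes it a lattice. Write $L_- = L \setminus \{\top_L\}$ and $U_- = U \setminus \{\bot_U\}$, and let $c_1, c_2$ denote the two coatoms of $L$, identified with the two atoms of $U$, so that $L_- \cap U_- = \{c_1, c_2\}$. The order can be written out explicitly: $x \le_V y$ holds iff $x \le_L y$ with both in $L_-$, or $x \le_U y$ with both in $U_-$, or $x \le_L c_i$ and $c_i \le_U y$ for some $i \in \{1,2\}$. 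Two facts about finite lattices drive the whole argument: every element of $L_-$ lies below some coatom of $L$, and every element of $U_-$ lies above some atom of $U$.

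I would then do a short case analysis to compute $x \wedge_V y$. If both $x, y \in L_-$, the meet is $x \wedge_L y$, because any purported lower bound sitting in $U_- \setminus L_-$ would be $\le_U$ an atom of $U$ and hence equal $\bot_U$, which is absent from $V$. If both lie in $U_-$ and $x \wedge_U y \ne \bot_U$, that element (being in $U_-$) serves as the meet; if instead $x \wedge_U y = \bot_U$, the atoms of $U$ below $x$ and below $y$ are disjoint, so one of $x, y$ sits above only $c_1$ and the other above only $c_2$, and a short check shows the meet is $c_1 \wedge_L c_2$. Finally, when $x \in L_-$ and $y \in U_-$, let $C \subseteq \{c_1, c_2\}$ collect the atoms of $U$ below $y$. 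If $C = \{c_1, c_2\}$, every element of $L_-$ is below some $c_i \le_U y$, so $x \le_V y$ and the meet is $x$; if $C = \{c_i\}$, the common lower bounds form $\downarrow_L(x \wedge_L c_i)$, giving meet $x \wedge_L c_i$. The boundary cases where $x$ or $y$ is itself one of $c_1, c_2$ fall under two of the above cases and yield the same answer.

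The step I expect to be the subtle one is the mixed case. It is tempting to compute the meet via the union $\downarrow_L(x \wedge_L c_1) \cup \downarrow_L(x \wedge_L c_2)$, and in general such a union need not have a top. The key observation that defuses this is that the union only arises if $y$ is above both atoms; but then $y$ dominates all of $L_-$ in $\le_V$, so the meet is simply $x$ and no union ever needs to be maximized. With all binary meets established, joins follow by the symmetric dual argument (noting that $(L +_2 U)^{\mathrm{op}}$ is itself a vertical 2-sum, namely $U^{\mathrm{op}} +_2 L^{\mathrm{op}}$), so $V$ is a lattice.
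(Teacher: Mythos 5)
Your proof is correct and is, up to order duality, essentially the paper's own argument: you compute greatest lower bounds and finish via the top element (equivalently via $(L +_2 U)^{\mathrm{op}} = U^{\mathrm{op}} +_2 L^{\mathrm{op}}$), while the paper computes least upper bounds and finishes via the bottom element, with the identical three-case split by membership in $L_-$ and $U_-$ and the same key subcase where the meet (join) escapes into the other summand. The one spot you should write out in full is the subcase $x \wedge_U y \neq \bot_U$: a common lower bound $z \in L_-$ must be shown to satisfy $z \le_L c_k \le_U x \wedge_U y$ for some $k$, which is exactly the short check the paper performs in the dual situation.
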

\begin{proof}
  Let $V = L +_2 U$, and write for brevity $L' = L \setminus \top_L$
  and $U' = U \setminus \bot_U$.  Furthermore let $a,b$ be the two
  common elements of $L'$ and $U'$.  We claim that every pair of
  distinct elements $s,t \in V$ has a least upper bound.  We consider
  three cases.
  \begin{enumerate}
    
  \item Case $s,t \in U'$.  The claim holds because $U$ is a lattice.
    
  \item Case $s \in L'$ and $t \in U'$.  If $s \le t$, the claim is
    clear.  Otherwise, without loss of generality, let $s \le a
    \not\le t$ and $s \not\le b \le t$. Now $w = a \vee t$ is an upper
    bound of $s$ and $t$.  If $u \in U'$ is an upper bound of $s$ and
    $t$, we must have $a \le u$ (because $s \not\le b$), thus $u$ is
    an upper bound of $a \vee t = w$.  So $w$ is the least upper bound
    of $s$ and $t$ in~$V$.
    
  \item Case $s,t \in L'$.  Let $w$ be their least upper bound in~$L$.
    If $w = \top_L$, then $a \vee b$ is the least upper bound of $s,t$
    in~$V$.

    Now suppose $w \ne \top_L$, and let $u$ be any upper bound of $s$
    and $t$.  If $u \in L'$, we have $w \le u$ because $L$ is a
    lattice.  Let then $u \in U'$.  If $a,b \le u$, then $w \le u$.
    Otherwise, without loss of generality, let $a \le u$ and $b
    \not\le u$.  Because $s,t \le u$, we have $s,t \le a$.  Then $w
    \le a \le u$.  Thus $w$ is the least upper bound of $s,t$ in~$V$.
    
  \end{enumerate}
  We have shown that $V$ is a join-semilattice.  Since it has a bottom
  element $\bot = \bot_L$, it is also a lattice.
\end{proof}

\begin{remark}
  A vertical 2-sum of two vi-lattices is a vi-lattice, and a vertical
  2-sum of two graded lattices is graded.
\end{remark}

\begin{lemma}
  A vertical 2-sum of two semimodular lattices is semimodular.
  \label{lemma:issemimodular}
\end{lemma}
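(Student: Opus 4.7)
The plan is to verify the semimodular covering condition $s \wedge_V t \lessdot_V s \Rightarrow t \lessdot_V s \vee_V t$ for each pair $s,t \in V = L +_2 U$ by case analysis on where $s$ and $t$ lie, mirroring the case structure in the proof of Lemma~\ref{lemma:islattice}. Writing $L' = L \setminus \{\top_L\}$, $U' = U \setminus \{\bot_U\}$, and letting $a,b$ be the two identified elements, I would first record how meets and joins in $V$ relate to those of $L$ and $U$. The key preliminary observation is that, because $a,b$ are the only coatoms of $L$ and the only atoms of $U$, every element of $L' \setminus \{a,b\}$ lies under $a$ or $b$ in $L$ and every element of $U' \setminus \{a,b\}$ lies over $a$ or $b$ in $U$; hence $\{a,b\}$ forms a waist in $V$ that separates $L' \setminus \{a,b\}$ strictly below from $U' \setminus \{a,b\}$ strictly above, so cover relations internal to $L'$ or to $U'$ are inherited faithfully from $L$ or $U$.

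I would then split into three cases. If $s,t \in L'$ with $s \vee_L t < \top_L$, the meet and join in $V$ agree with those of $L$, and semimodularity of $L$ gives the conclusion directly. If $s,t \in L'$ with $s \vee_L t = \top_L$, semimodularity of $L$ applied to the hypothesis forces $t \lessdot_L \top_L$, so $t \in \{a,b\}$; then $s \vee_V t = a \vee_U b$ by the join computation of Lemma~\ref{lemma:islattice}, and the conclusion reduces to showing $a, b \lessdot_V a \vee_U b$, which follows from semimodularity of $U$ applied to the atom pair (since $a \wedge_U b = \bot_U \lessdot_U a$). The case $s,t \in U'$ is dual in structure, pivoting on whether $s \wedge_U t > \bot_U$ or $s \wedge_U t = \bot_U$ and bouncing to the atoms when necessary.

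For the mixed case, WLOG $s \in L' \setminus \{a,b\}$ with $s \le_L a$ and $s \not\le_L b$, and $t \in U' \setminus \{a,b\}$ (the symmetric version with $s \in U'$, $t \in L'$ reduces to $t \le_V s$ or fails the hypothesis, by the same interval analysis). If $a \le_U t$ then $s \le_V t$ and the hypothesis is vacuous; otherwise $t \ge_U b$ but $t \not\ge_U a$, and the join formula from Lemma~\ref{lemma:islattice} gives $s \vee_V t = a \vee_U t$. Semimodularity of $U$ applied to the atom $a$ and the element $t$ (using $a \wedge_U t = \bot_U \lessdot_U a$) then yields $t \lessdot_U a \vee_U t$, which transfers to $V$, independently of the hypothesis in $L$.

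The main obstacle is the bookkeeping needed to confirm that no element of $U' \setminus L'$ can sneak into an interval between two elements of $L'$ (and vice versa) in the order of $V$, and that in the exceptional subcases $s \vee_L t = \top_L$ or $s \wedge_U t = \bot_U$ the participants are forced into the identified pair $\{a,b\}$. Once this waist structure is pinned down, every nontrivial subcase collapses to a routine appeal to the semimodularity of $L$ or $U$, in most cases to the observation that $U$ semimodular implies its atoms are covered by their pairwise joins.
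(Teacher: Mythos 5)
Your argument is correct, and it rests on the same basic strategy as the paper --- direct verification of a covering characterization of semimodularity in $V$, organized by where the two elements sit relative to the waist $\{a,b\}$ --- but you verify a different covering condition, and that choice is exactly what makes your case analysis so much heavier. The paper checks Birkhoff's symmetric form: if $s$ and $t$ \emph{both} cover $s \wedge t$, then both are covered by $s \vee t$. The symmetric hypothesis does all the localizing work: two elements covering a common element must lie on the same side of the waist (covers of an element of $U'$ lie in $U'$; covers of an element of $L' \setminus \{a,b\}$ lie in $L'$), so the mixed case you labor over simply cannot arise, and each remaining case is a single appeal to the semimodularity of $L$ or of $U$. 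You instead check the one-sided condition $s \wedge t \prec s \Rightarrow t \prec s \vee t$, whose asymmetric hypothesis does not constrain $t$, forcing you through the mixed case and the boundary subcases where the join or meet escapes to $\top_L$ or $\bot_U$. Your handling of these is sound --- each one collapses, as you say, to the observation that an atom $a$ of $U$ with $a \not\le t$ satisfies $a \wedge t = \bot_U \prec a$ and hence $t \prec a \vee t$ by semimodularity of $U$ --- so there is no gap, only deferred bookkeeping that does check out. What the extra work buys you is that you establish the one-sided covering condition directly, which is the stronger and more standard definition of semimodularity for finite lattices (Birkhoff's condition is in general weaker), whereas the paper's proof, read literally, verifies only the Birkhoff form. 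The lesson worth taking away is that choosing the symmetric condition shrinks this proof to a few lines.
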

\begin{proof}
  Let $L,U$ be semimodular, $V = L +_2 U$, and $L' = L \setminus
  \top_L$ and $U' = U \setminus \bot_U$.  Let $s,t \in V$ such that
  $s,t \succ (s \wedge t)$.  Then either $s,t \in U'$ or $s,t \in L'
  \setminus U'$.  In the first case, $s,t \prec (s \vee t)$ because
  $U$ is semimodular.  In the second case, $s,t \prec (s \vee t)$
  because $L$ is semimodular.
\end{proof}

\begin{lemma}
  A vertical 2-sum of two modular lattices is modular.
  \label{lemma:ismodular}
\end{lemma}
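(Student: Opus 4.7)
The plan is to show that $V = L +_2 U$ contains no sublattice isomorphic to the pentagon $N_5$. Suppose for contradiction that $\{p, q, r, s, t\} \subseteq V$ is such a sublattice, with $p < q < r < t$ and $p < s < t$, where $s$ is incomparable to both $q$ and $r$ and $q \wedge s = r \wedge s = p$, $q \vee s = r \vee s = t$ in $V$. Write $L' = L \setminus \top_L$ and $U' = U \setminus \bot_U$, and let $a, b$ be the two common elements; note that $a, b$ are incomparable both in $L$ (as distinct coatoms) and in $U$ (as distinct atoms).

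I would first locate $p$ and $t$. Meets in $V$ of two elements of $L'$ agree with meets in $L$, and joins in $V$ of two elements of $L'$ agree with joins in $L$ unless the $L$-join would be $\top_L$, in which case the $V$-join is $a \vee_U b$. If all five elements of the $N_5$ lay in $L'$, then $t \in L'$ would force each $V$-join to match the corresponding $L$-join, reproducing the $N_5$ in $L$ and contradicting modularity of $L$. Dually, not all five lie in $U'$. Since $p$ is the minimum and $t$ the maximum of the $N_5$, this forces $p \in L' \setminus \{a, b\}$ and $t \in U' \setminus \{a, b\}$.

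I would then case-split on the position of $s$. If $s \in L' \setminus \{a, b\}$, then $s$ lies below at least one coatom of $L$, and every possible position of $q$ leads to a contradiction: either $s \le q$ in $V$ so that $q \vee s = q \ne t$; or both $q$ and $s$ sit under one coatom of $L$, so $q \vee s$ stays in $L'$ and cannot equal $t \in U'$; or $q$ and $r$ both lie in $U' \setminus \{a, b\}$ above $b$ and incomparable to $a$, in which case $q \vee_U a = r \vee_U a = t$ and $q \wedge_U a = r \wedge_U a = \bot_U$ together with $q < r$ contradict the modular cancellation law in $U$; or finally a mixed configuration with $q \in L'$ and $r \in U'$ exhibits an explicit pentagon $\{\bot_U, a, b, r, a \vee_U b\}$ inside $U$. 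The case $s \in U' \setminus \{a, b\}$ is dual, with $r \wedge s$ playing the role of $q \vee s$. If $s \in \{a, b\}$, say $s = a$, then $q$ and $r$ are both incomparable to $a$, each lying in $L' \setminus \{a\}$ or $U' \setminus \{a\}$, and the same cancellation and embedded-pentagon arguments inside $L$ or $U$ dispose of every resulting configuration.

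The main obstacle will be the careful bookkeeping, particularly the two bridges across the equator: the $V$-join of two $L'$-elements jumps to $a \vee_U b$ precisely when their $L$-join would be $\top_L$, and dually for meets of $U'$-elements. Once this translation is made explicit, each sub-case reduces either to the modular cancellation law inside $L$ or $U$ or to exhibiting an explicit $N_5$ in one of them.
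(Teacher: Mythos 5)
Your overall strategy (show $V=L+_2U$ has no pentagon sublattice, using the explicit description of joins and meets across the two gluing elements $a,b$) is viable, and the key computational facts you rely on are correct: $L'$ is a down-set and $U'$ an up-set of $V$, meets of $L'$-elements agree with $L$-meets, and joins of $L'$-elements agree with $L$-joins except that $\top_L$ is replaced by $a\vee_U b$. But the case enumeration for $s\in L'\setminus\{a,b\}$ (say $s\le a$, $s\not\le b$) is not exhaustive as listed. Your four alternatives cover: $s\le q$; $q,s$ under a common coatom; $q,r\in U'\setminus\{a,b\}$; and $q\in L'$, $r\in U'$. They omit the configuration $q,r\in L'$ with $q\not\le a$, where both joins $q\vee s$ and $r\vee s$ jump across to $a\vee_U b=t$, so the ``join stays in $L'$'' contradiction is unavailable. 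That subcase does close, but only by the cancellation argument inside $L$ (from $q<r$, $q\vee_L s=r\vee_L s=\top_L$ and $q\wedge_L s=r\wedge_L s=p$ conclude $q=r$), which you never invoke for $L$ in this branch. Similarly, the final sentence dismissing all configurations with $s\in\{a,b\}$ by ``the same arguments'' is an assertion, not a proof; those configurations do all reduce to cancellation in $L$ or in $U$, but each requires checking that the relevant joins and meets translate as claimed. As written, the proposal is a plan with a genuine hole in its claimed exhaustiveness.

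You should also know that the paper disposes of this lemma in one line: it applies Lemma~\ref{lemma:issemimodular} to the vertical 2-sum and to its dual. The point is that the 2-sum construction is self-dual --- the dual of $L+_2U$ is the 2-sum of the duals of $U$ and $L$ --- and a finite lattice is modular if and only if both it and its dual are (upper) semimodular. Since duals of modular lattices are modular, hence semimodular, the semimodularity lemma immediately gives that $V$ and its dual are both semimodular, so $V$ is modular. This completely avoids the bookkeeping across the equator that dominates your argument; if you want to salvage your approach, the cancellation law $\,x\vee z=y\vee z,\ x\wedge z=y\wedge z,\ x\le y\Rightarrow x=y\,$ applied in $L$ or $U$ with $z\in\{s,a,b\}$ is the uniform engine that closes every subcase, including the one you omitted.
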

\begin{proof}
  Apply Lemma~\ref{lemma:issemimodular} to both the vertical 2-sum and
  its dual.
\end{proof}

For families of graded vi-lattices, vertical 2-sum leads to a
recurrence analogous to Lemma~\ref{lemma:recursion}.  Let us first
define the building blocks that we are going to use.  If $X$ is a
graded lattice, we say that two elements of~$X$ are a \emph{neck} if
(1)~they have the same rank, (2)~they are the only elements having
that rank, and (3)~they are not atoms or coatoms.  We say that a
graded vi-lattice is a \emph{piece} if it has two atoms, two coatoms
and no neck, and its rank is at least three.  It follows that a piece
has at least six elements.

We can now state the recurrence.  For simplicity we state it as a
lower bound only; in particular, this implies that we need not
separate the cases where there are two nonisomorphic vertical 2-sums.

\begin{theorem}
  Let $\F$ be a family of graded vi-lattices that is closed under
  vertical 2-sum.  Let $\fvi(n)$ and $\fpc(n)$ be the numbers of
  $n$-element lattices and pieces in $\F$, respectively.  Let $N \ge 6$
  be an integer constant, and let $\flo \colon
  \natplus \to \natplus$ be the sequence defined by
  \[
  \flo(n) = \fpc(n)
  \]
  when $1 \le n \le 6$,
  \begin{equation}
    \flo(n) = \fpc(n) + \sum_{k=6}^{n-1} \fpc(k) \flo(n-k+4),
    \label{eq:twobase}
  \end{equation}
  when $7 \le n \le N$, and
  \begin{equation}
    \flo(n) = \sum_{k=6}^{N} \fpc(k) \flo(n-k+4)
    \label{eq:twotruncated}
  \end{equation}
  when $n \ge N+1$.  Then $\fvi(n) \ge \flo(n)$ for all $n \ge 1$.
  \label{thm:tworecursion}
\end{theorem}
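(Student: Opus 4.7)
The plan is to proceed by strong induction on $n$, mirroring Lemma~\ref{lemma:recursion} and Theorem~\ref{thm:homo}: alongside the sequence $\flo$ I would construct, for each $n$, a family of $n$-element vi-lattices in~$\F$ of cardinality at least $\flo(n)$. The base case $1 \le n \le 6$ is immediate, since $\flo(n) = \fpc(n)$ and every piece of size $n$ is, by definition, a graded vi-lattice in~$\F$.

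For the inductive step I would exhibit two kinds of $n$-element vi-lattices in~$\F$: the $\fpc(n)$ pieces of size $n$ (included only when $n \le N$), and the vertical 2-sums $V = P +_2 U$, where $P$ is a piece of size $k$ with $6 \le k \le \min(n-1,N)$ and $U$ is one of the $\flo(n-k+4)$ vi-lattices of size $n-k+4$ produced by the inductive hypothesis. Such a $V$ has $|P|+|U|-4 = n$ elements; it is a lattice by Lemma~\ref{lemma:islattice}, a graded vi-lattice by the remark preceding Lemma~\ref{lemma:issemimodular}, and a member of~$\F$ by the closure assumption. Applied inductively, that same remark also shows that every candidate $U$ has exactly two atoms---they are the two atoms of the leftmost piece used to build $U$---so the vertical 2-sum $P +_2 U$ is always well defined.

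The main new obstacle, absent from the proof of Theorem~\ref{thm:homo}, is injectivity: distinct pairs $(P,U)$ must give non-isomorphic lattices, and no such $V$ may be isomorphic to a piece of size~$n$. I would establish this through a canonical \emph{neck decomposition} of~$V$. In $V = P +_2 U$ the two seam elements sit at rank $r_P - 1 \ge 2$, are the only elements at that rank, and are neither atoms of~$V$ (since $r_P \ge 3$) nor coatoms of~$V$ (since $|U| \ge 6$ together with $U$ having only two atoms forces $U$ to have rank at least~$3$, so the top of $V$ lies at least two ranks above the seam). Hence the seam is a neck of~$V$. Below the seam, the piece property of~$P$ forbids any intermediate rank from containing exactly two elements, so the seam is in fact the \emph{lowest} neck of~$V$. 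Cutting $V$ at this neck and capping the lower part with a new top and the upper part with a new bottom recovers $P$ and $U$ up to isomorphism. A piece, having no neck at all, cannot be isomorphic to any such $V$. Combining this canonical recovery with the inductive hypothesis then yields the inequalities \eqref{eq:twobase} and \eqref{eq:twotruncated}, and completes the induction.
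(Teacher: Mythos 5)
Your proposal is correct and follows essentially the same route as the paper's proof: strong induction building $n$-element lattices as $P +_2 U$ with $P$ a piece, and injectivity via the seam being the lowest neck of the composite. You in fact supply more detail than the paper does on why the seam is a neck and why no neck can occur below it; the paper asserts this recovery step rather tersely.
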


\begin{proof}
  We prove by induction a stronger claim: that $\F$ contains at least
  $\flo(n)$ lattices of $n$ elements that have two coatoms and two
  atoms.  For $n \le 6$ the claim is clear.

  Let then $n \ge 7$.  For each $k$ such that $6 \le k \le n-1$, there
  are $\fpc(k)$ ways to choose a $k$-element piece~$L \in \F$ and, by
  the induction assumption, at least $\flo(n-k+4)$ ways to choose an
  $(n-k+4)$-element lattice $U \in \F$ that has two coatoms and two
  atoms.  For each choice of $L$ and~$U$, if $X = L +_2 U$, then $X$
  is in $\F$, has two coatoms and two atoms, and has $n$ elements.

  We claim that different choices of the pair $(L,U)$ cannot yield the
  same lattice $X$.  Suppose that $X = L +_2 U$, with $L$ and~$U$
  chosen as above.  Let $a$ and~$b$ be the neck of lowest rank in~$X$.
  The only way to represent $X$ as $X = L' +_2 U'$, so that $L'$ is a
  piece, is that $L' = L$ and $U' = U$.

  Adding up the choices, and including the $\fpc(n)$ pieces of $n$
  elements, we observe that in $\F$ there are at least
  \[
  \fpc(n) + \sum_{k=6}^{n-1} \fpc(k) \flo(n-k+4)
  \]
  $n$-element lattices that have two coatoms and two atoms.  For $n >
  N$ this can be further lower bounded by leaving out the first term
  and stopping the sum at $k=N$.  This concludes the induction.
\end{proof}

Since $\flo$ in Theorem~\ref{thm:tworecursion} is defined by a
homogeneous linear recurrence with constant terms, it can be lower
bounded by the same method as in the previous section, if $\fpc$ is
known up to $\fpc(N)$.  Modular vi-lattices of $n \le 30$ elements and
semimodular vi-lattices of $n \le 25$ elements were generated
in~\cite{kohonen2017}, and the listings are available in~\cite{eudat}.
With a short program we can check which of those vi-lattices are
pieces (as defined above), and count them.  From the counts we obtain
the following results.

\begin{proposition}
  $\mvi(n) \ge 2.1562^n$ for all $n$ large enough.
  \label{prop:mvi}
\end{proposition}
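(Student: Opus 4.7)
The plan follows the template of Proposition~\ref{prop:m}, with Theorem~\ref{thm:tworecursion} replacing Theorem~\ref{thm:homo}. First I would set $\F$ to the family of modular vi-lattices. By Lemma~\ref{lemma:ismodular}, the observation (preceding Lemma~\ref{lemma:issemimodular}) that a vertical 2-sum of two vi-lattices is again a vi-lattice, and the fact that every modular lattice is graded, the family $\F$ satisfies the hypotheses of Theorem~\ref{thm:tworecursion}. I would take $N=30$ to match the range for which modular vi-lattices have been enumerated in~\cite{kohonen2017}.

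Next I would compute $\mpc(n)$ for $n = 6,7,\ldots,30$ by filtering the lattice listings available in~\cite{eudat}: a modular vi-lattice counts as a piece precisely when it has rank at least three, exactly two atoms, exactly two coatoms, and no neck. These values (together with $\mpc(n)=0$ for $n<6$) instantiate the recurrence~\eqref{eq:twotruncated}, which in the regime $n \ge N+1$ is a homogeneous linear recurrence in the lags $2,3,\ldots,26$ with characteristic polynomial
\[
x^{26} \;=\; \sum_{k=6}^{30} \mpc(k)\, x^{30-k}.
\]

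I would then compute the root $r$ of largest absolute value numerically; by the stated bound it should be a simple real root slightly exceeding $2.1562$. To make the conclusion rigorous in the face of floating-point noise, I would set $c = 2.1562 < r$ and prove by induction that $\flo(n) \ge c^n$ for $n \ge n_0$, for a sufficiently large threshold $n_0$. The base case fixes $26$ consecutive values $\flo(n_0), \ldots, \flo(n_0+25)$, computed exactly by iterating~\eqref{eq:twotruncated} from the initial segment $\flo(1),\ldots,\flo(30)$; the inductive step follows by substituting $\flo(n-j) \ge c^{n-j}$ for $j=2,\ldots,26$ into~\eqref{eq:twotruncated}. Since Theorem~\ref{thm:tworecursion} gives $\mvi(n) \ge \flo(n)$, the claim follows.

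The obstacle here is computational rather than conceptual: one must actually enumerate the pieces among modular vi-lattices up to $30$ elements to obtain $\mpc(6),\ldots,\mpc(30)$, then compute the dominant root of a degree-$26$ polynomial with enough precision to certify that it exceeds $2.1562$, and finally choose a threshold $n_0$ where the base-case inequalities are verified exactly. Once these numbers are in hand, the remainder of the argument is rote, mirroring the induction in the proof of Proposition~\ref{prop:m}.
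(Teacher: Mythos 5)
Your proposal is correct and follows essentially the same route as the paper: Theorem~\ref{thm:tworecursion} with $N=30$ applied to the piece counts $\mpc(6),\ldots,\mpc(30)$ (which the paper lists explicitly, beginning $1,0,0,3,3,\ldots$), a numerically located dominant root $r \approx 2.156295$, and the same induction with $c=2.1562$ from the threshold $150\,000$. The only thing you defer --- the actual enumeration of the pieces among the listed modular vi-lattices --- is precisely the data the paper supplies.
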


\begin{proof}
  The numbers of modular $n$-element pieces, for $n=6,7,\ldots,30$,
  are
  \begin{multline*}
    1,0,0,3,3, 4,15,27,52,117,259,554,1253,2802,6366,
    14429,33150, \\
    76090,175799,406851,946151,2204246,5153946,12076517,28375409.
  \end{multline*}
  Applying Theorem~\ref{thm:tworecursion} with these values, we obtain
  a sequence $\flo$ such that $\mvi(n) \ge \flo(n)$ for all $n \ge 1$.
  Numerically we find that the root of the auxiliary equation is a
  single real root $r \approx 2.156295$.  For a lower bound, we take
  $c = 2.1562$ and claim that $\flo(n) \ge c^n$ for $n \ge 150\,000$.
  This follows by induction as in the proof of
  Proposition~\ref{prop:m}.
\end{proof}

\begin{proposition}
  $\svi(n) \ge 2.6797^n$ for all $n$ large enough.
  \label{prop:svi}
\end{proposition}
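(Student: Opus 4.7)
The plan is to imitate the proof of Proposition \ref{prop:mvi}, replacing the modular data by semimodular data. First, from the listings of semimodular vi-lattices for $n \le 25$ available in \cite{kohonen2017,eudat}, I would count those that qualify as pieces (graded vi-lattices of rank at least three, with exactly two atoms, exactly two coatoms, and no neck), obtaining the values $\spc(6), \spc(7), \ldots, \spc(25)$. The family of semimodular vi-lattices is a family of graded vi-lattices closed under vertical 2-sum, by the remark preceding Lemma \ref{lemma:issemimodular} together with Lemma \ref{lemma:issemimodular} itself, so Theorem \ref{thm:tworecursion} applies with $N = 25$, yielding a sequence $\flo$ such that $\svi(n) \ge \flo(n)$ for all $n \ge 1$.

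Next I would analyze \eqref{eq:twotruncated} as a homogeneous linear recurrence. Substituting $i = k-4$, it becomes $\flo(n) = \sum_{i=2}^{21} \spc(i+4)\,\flo(n-i)$, a recurrence of order $21$ with known nonnegative integer coefficients. I would then compute numerically the roots of its auxiliary equation $x^{21} = \sum_{i=2}^{21} \spc(i+4)\, x^{21-i}$; to justify the stated constant, the root of largest modulus should turn out to be a single real root $r$ slightly exceeding $2.6797$, so I set $c = 2.6797$.

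The rigorous conclusion then mirrors Proposition \ref{prop:m}. I would pick a threshold $n_0$ (for definiteness, $n_0 = 150\,000$) and verify by direct integer evaluation of \eqref{eq:twotruncated} that $\flo(n) \ge c^n$ for a block of consecutive values long enough to initialize the recurrence. For any $n$ past that block, whenever $\flo(k) \ge c^k$ holds for the $20$ values $k = n-21, n-20, \ldots, n-2$, substituting into \eqref{eq:twotruncated} immediately yields $\flo(n) \ge c^n$, so the induction closes.

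The only delicate point is the assertion that the dominant root of the degree-$21$ auxiliary polynomial truly exceeds $2.6797$, which at face value is a floating point matter. As in the earlier propositions this is sidestepped because $r$ never enters the proof directly: its numerical value merely guides the safe choice of $c$ below $r$, after which the lower bound is certified by exact integer arithmetic on the finitely many base values together with the deterministic inductive step. I would therefore expect no new technical obstacle beyond what has already appeared in Proposition \ref{prop:mvi}.
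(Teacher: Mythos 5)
Your proposal is correct and follows essentially the same route as the paper: count the semimodular pieces for $6 \le n \le 25$, apply Theorem~\ref{thm:tworecursion} with $N=25$, locate the dominant single real root $r \approx 2.6798$ of the order-$21$ auxiliary equation, and certify $\flo(n) \ge 2.6797^n$ by a finite base-case check plus the inductive step through \eqref{eq:twotruncated}. The only (immaterial) difference is the threshold: the paper verifies the base case starting at $n \ge 200\,000$ rather than $150\,000$, and you should be prepared to raise your threshold if the direct verification fails at the lower value.
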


\begin{proof}
  The numbers of semimodular $n$-element pieces, for $n=6,7,\ldots,25$,
  are
  \begin{multline*}
    1,0,0,5,6,9,40,122,323,964,2999,9374,30292,100539,339046,1159101,\\
    4018137,14116920,50263399,181341142.
  \end{multline*}
  Applying Theorem~\ref{thm:tworecursion} with these values, we obtain
  a sequence $\flo$ such that $\svi(n) \ge \flo(n)$ for all $n \ge 1$.
  Numerically we find that the root of the auxiliary equation is a
  single real root $r \approx 2.679797$.  For a lower bound, we take
  $c = 2.6797$ and claim that $\flo(n) \ge c^n$ for $n \ge 200\,000$.
  This follows by induction as in the proof of
  Proposition~\ref{prop:m}.
\end{proof}

\section{Concluding remarks}

This work was motivated by two empirical observations.  The first is
that modular vi-lattices are usually long and narrow
(cf.~\cite[Figs.~4 and~5]{kohonen2017}).  The second is that the
numbers of modular (vi-)lattices exhibit a rather stable exponential
growth, at least up to $n=30$.  Together these observations suggest
that much of that growth could be attributed to a ``Cartesian''
vertical combination of independently chosen parts.

In contrast, the vertical sum and 2-sum are not likely to be very
useful with lattice families whose members tend to be short and wide;
for example, with graded lattices, exponential bounds are superseded
by the already known bounds of the form $c^{n^{3/2}}$.

The notion of constructing vi-lattices by some kind of vertical
composition bears similarity to the work of Ern\'e \emph{et al.}  on
distributive lattices~\cite{erne2002}; however, their vertical
construction is different, and seems specific to distributive
lattices, as it works on finite posets that are in one-to-one
correspondence with finite distributive lattices (by a theorem of
Birkhoff).  Our vertical 2-sum works on lattices directly, and is
applicable to several lattice families.

It is tempting to extend the idea of the vertical 2-sum to lattices
that have more than two atoms and coatoms, but the result may not be a
lattice.  Consider, for example, defining \emph{vertical 3-sum}
($+_3$) as the obvious analogue of the vertical 2-sum.  Then the
analogue of Lemma~\ref{lemma:islattice} does not hold: for a
counterexample, if $B_3$ is the Boolean lattice of order $3$, then
$B_3 \: +_3 \: B_3$ is not a lattice.  In order to use such
generalized vertical sums for counting purposes, one needs an
efficient method of filtering out the non-lattices.  We leave such
studies for future research.


\bibliographystyle{plain}
\bibliography{refs}

\end{document}